\def\section{\@startsection{section}{1}%
  \z@{1.1\linespacing\@plus\linespacing}{.8\linespacing}%
  {\normalfont\Large\scshape\centering}}
\theoremstyle{plain}
\newtheorem*{MT}{Main Theorem}
\newtheorem*{conj*}{Root Groups Conjecture}
\newtheorem*{thm1.2}{(1.2) Theorem}
\newtheorem*{thm1.3}{(1.3) Theorem}
\newtheorem*{thm1.4}{(1.4) Theorem}
\newtheorem*{prop*}{Proposition}
\newtheorem*{thm*}{Theorem}
\def\eroman{\etype{\roman}}
\def\diag{\operatorname{diag}}
\newtheorem{prop}{Proposition}[section]
\newtheorem{thm}[prop]{Theorem}
\newtheorem{lemma}[prop]{Lemma}
\theoremstyle{definition}
\newtheorem{Def}[prop]{Definition}
\newtheorem*{Def*}{Definition}
\newtheorem*{notation*}{Notation}
\newtheorem{remark}[prop]{Remark}
\newcommand{\etype}[1]{\renewcommand{\labelenumi}{(#1{enumi})}}
\newcommand{\la}{\lambda}
\newcommand{\ff}{F}
\newcommand{\zz}{\mathbb{Z}}
\newcommand{\ga}{\alpha}
\newcommand{\gb}{\beta}
\newcommand{\gc}{\gamma}
\newcommand{\gd}{\delta}
\newcommand{\gre}{\epsilon}
\newcommand{\gl}{\lambda}
\newcommand{\gr}{\rho}
\newcommand{\gs}{\sigma}
\newcommand{\gt}{\tau}
\newcommand{\charc}{{\rm char}}
\newcommand{\half}{\textstyle{\frac{1}{2}}}
\numberwithin{equation}{section}
\begin{document}
\title[Primitive axial algebras are of Jordan type]{Primitive axial algebras are of Jordan type}
\author[Louis Rowen, Yoav Segev]
{Louis Rowen$^*$\qquad Yoav Segev}

\address{Louis Rowen\\
         Department of Mathematics\\
         Bar-Ilan University\\
         Ramat Gan\\
         Israel}
\email{rowen@math.biu.ac.il}
\address{Yoav Segev \\
         Department of Mathematics \\
         Ben-Gurion University \\
         Beer-Sheva 84105 \\
         Israel}
\email{yoavs@math.bgu.ac.il}
\thanks{$^*$The first author was supported by the Israel Science Foundation grant 1623/16}

\keywords{axis, axial algebra,  fusion,
idempotent} \subjclass[2010]{Primary: 17A05, 17A15, 17A20 ;
 Secondary: 17A36,
17C27}

\begin{abstract}
The notion of axial algebra is closely
related to $3$-transposition groups, the Monster group and vertex operator algebras.
In this work we  continue our 
previous works and compete the proof that
all algebras generated by a set of primitive axes
not necessarily of the same type
(see the definition in the body of the paper),  are primitive
axial algebras of Jordan type.
\end{abstract}

\date{\today}
\maketitle
\section{Introduction}
In his simplified construction of the Fischer-Griess Monster group,
Conway \cite{C} constructed a commutative algebra which is not associative,
denoted $B^{\sharp}$ in \cite{HRS}, 
such that to each $2A$-involution of the Monster, there corresponds 
a unique idempotent $a,$ such that multiplication by $a$ from
$B^{\sharp}\to B^{\sharp}$ has minimal polynomial
$(t-1)t(t-\frac{1}{4})(t-\frac{1}{32}).$
Conway called these idempotents ``axial vectors''.  Moreover,
the eigenspaces of these vectors have certain multiplicative
properties which were call ``fusion rules'' in \cite{HRS}.

Another interesting motivating example is as follows.  Given a normal
set $D$ of $3$-transpositions in a group $G,$ one can construct
a {\it Fischer space,} and a corresponding {\it Matsuo algebra,} see
\cite{HRS}.  This Matsuo algebra is a commutative, non-associative
algebra generated by idempotents which are in one to one correspondence
with the transpositions in $D.$  Multiplication by these idempotents
have minimal polynomial $(t-1)t(t-\gl),$ with $\gl\notin\{0,1\},$
and again the eigenspaces satisfy certain fusion rules. 

There are also the {\it Majorana algebras} of Ivanov \cite{I}.
In \cite{HRS} the notion of a Majorana algebra was simplified and
generalized, and all the above examples are included in the new
notion, introduced in \cite{HRS}, of {\it axial algebras.}
We refer the reader to the introduction of \cite{HRS}
for more information.
See also  \cite{DPSC} for non-commutative generalizations.

This paper is a continuation of \cite{RoSe1} and \cite{RoSe2}. 
Together with the previous two paper
it may be considered as a start of the classification
of non-commutative axial algebras.
Our main goal being to classify algebras~$A$ generated by two primitive axes,
and, as a consequence we prove that for any algebra generated by primitive axes,
all the axes are necessarily of Jordan type.

Throughout this paper, $A$ is an algebra not necessarily associative or commutative, not
necessarily with a multiplicative unit element, over a field $F$
with~${\charc(F)\ne 2.}$

\begin{enumerate}\eroman
\item 
For $a,b\in A,$ we define the {\it left and right} multiplication maps
 $L_a(b) := a\cdot b $ and $R_a(b) := b\cdot a.$

\item  
We write $A_{\la}(X_a)$ for the eigenspace
of $\la$ with respect to the transformation $X_a$, $X\in \{L,R\},$ 
i.e., $A_{\la}(L_a) = \{ v \in A: a\cdot v = \la v\},$
and similarly for $A_{\la}(R_a).$ Often we just write $A_\la$ for
$A_{\gl}(L_a)$, when $a$ is understood. We write $A_{\gl,\gd}(a)$ for $A_\gl(L_a) \cap A_\gd(R_a).$
\end{enumerate}

From now on, let $a\in A$ be an idempotent, and $\gl,\gd\notin\{0,1\}$ in $\ff$.
\begin{Def}\label{maind}$ $
\begin{enumerate}
\item
 $a$ is a {\it left axis} of {\it type} $\gl$ if 
$L_a $ satisfies a polynomial $p(t)={t(t-1)(t - \gl).}$
The left axis $a$ is {\it primitive} if $A_1 = \ff a$.

\item
A {\it primitive right axis of type $\gd$} is defined similarly.

\item
$a$ is a  {\it primitive axis (2-sided) of type $(\gl,\gd)$} if $a$ is a 
primitive left axis of  type $\gl$ and a primitive right   axis of type $\gd$ and, in
addition, $L_a R_a =  R_a  L_a.$
Thus $A_{1,1} =\ff a =A_1;$ in particular $A_{1,\gd} = A_{\gl,1} =
0$. Hence $A$ decomposes into a
direct sum
\[
A=\overbrace{A_{1,1} \oplus A_{0,0}}^{\text {$++$-part}}\oplus
\overbrace{A_{0,\gd}}^{\text{$+-$-part}} \oplus \overbrace{
A_{\la,0}}^{\text {$-+$-part}}\oplus \overbrace{A_{\gl,\gd
}}^{\text{$--$-part}},
\]
and this is a $\zz_2\times\zz_2$ grading of $A$ (multiplication $(\gre,\gre')(\gr,\gr')$
is defined in the obvious way for $\gre,\gre',\gr,\gr'\in\{+,-\}$).

In this paper, a {\it primitive axis} is a primitive axis of type $(\gl,\gd),$
for some $(\gl,\gd).$

\item
 $a$ is a primitive axis of {\it Jordan type} if moreover
 $A_{\gl,0} = A_{0,\gl}=0.$

\item
$A$ is a primitive axial algebra of Jordan type, if $A$ is generated by
primitive axes of Jordan type (not necessarily of the same type).
\end{enumerate}
\end{Def}
\medskip

 In \cite{RoSe1} and \cite{RoSe2} we showed that if $A$ is an  algebra of dimension $\le 3$ 
generated by two primitive axes $a,b$, then $a$ and
 $b$ have Jordan type (not necessarily the same!), so $A$ is as in part (1) of the Main Theorem below. 
On the other hand, we showed in \cite[Theorem~4.3]{RoSe2} that $\dim A \le 5$,
 but we did not yet have  any examples of dimension 4 or~5. 
The purpose of this paper is to show that there are no such examples. 
\begin{MT}$ $
\begin{enumerate}
\item
If $A$ is generated by two primitive axes $a,b,$ then $\dim(A)\le 3,$ so $A$
is classified in \cite[Theorem C]{RoSe2}, and \cite[Theorems A and B]{RoSe1}.

\item
If $A$ is generated by a set of primitive axes,
then all the primitive axes in $A$ are of Jordan type.
\end{enumerate}
\end{MT}

We recall from \cite{RoSe1} the definition of the Miyamoto involutions
associated with a primitive axis of type $(\gl,\gd).$

\begin{Def}\label{Miya}[Miyamoto involution]$ $
It is easy to check
that any non-trivial $\mathbb Z_2$-grading of $A$ induces an
automorphism of order $2$ of $A.$  Indeed, if $A=A^+\oplus A^-,$
then $y\mapsto y^+-y^-$ is such an automorphism, where $y\in A$ and
$y=y^+ +y^-$ for $ y^+\in A^+, y^-\in A^-.$

So if $a\in A$ is a primitive axis of type $(\gl,\gd),$ then  we
have three such automorphisms of order 2, which, conforming with the literature, we call the {\bf
Miyamoto involutions} {\it associated with $a$}:

\begin{enumerate}\eroman
\item
$\gt_{\gl,a}=\gt_{\gl}: y\mapsto y-2y_{\gl}=\ga_y a+y_{0,0}+y_{0,\gd}-y_{\gl,0}-y_{\gl,\gd}.$

\item
$\gt_{\gd,a}=\gt_{\gd}: y\mapsto y-2\, {}_{\gd}y=\ga_y a+y_{0,0}-y_{0,\gd}+y_{\gl,0}-y_{\gl,\gd}.$

\item
$\gt_{\diag,a}=\gt_{\diag}: y\mapsto \ga_y a+y_{0,0}-y_{0,\gd}-y_{\gl,0}+y_{\gl,\gd}.$
\end{enumerate}
\end{Def}

Recall also
%
\begin{Def}[Fusion rules]
We call the $\zz_2$-grading of the eigenspaces of a left (resp.~right) axis $a$
the left (resp.~right) {\it fusion rules} for $a.$  

See also \cite[subsection 2.1, p.~85]{HRS}.
\end{Def}

\section{two generated primitive axial algebras}
Throughout this section,  $A$ is  an algebra generated by $2$
primitive axes, $a$ of type $(\gl,\gd)$ and $b$ of type
$(\gl',\gd')$.

We recall the 2-sided eigenvector decomposition of
an element $v\in A,$ with respect to the axis $a:$
\[
v = \ga_v a + v_{0,0} + v_{\gl,0} +v_{0,\gd}+ v_{\gl,\gd},\text{ for }\ga_v \in \ff,\text{ where }v_{\mu,\nu}\in A_{\mu,\nu}(a).
\]
and the decomposition  of an element $w\in A,$ with respect to the axis $b:$
\[
w = \gb_w b + w_{0,0} + w_{\gl',0} +w_{0,\gd'}+ w_{\gl',\gd'},\text{ for }\gb_w \in \ff,\text{ where }w_{\mu,\nu}\in A_{\mu,\nu}(b).
\]
(see \cite[Equation~(1.2)]{RoSe2}.)  

Throughout this paper we denote:
\[
c:=b_{0,0},\qquad x:=b_{\gl,0},\qquad y:=b_{0,\gd},\qquad
z:=b_{\gl,\gd}.
\]
and
\[
c':=a_{0,0},\qquad x':=a_{\gl',0},\qquad y':=a_{0,\gd'},\qquad
z':=a_{\gl',\gd'}.
\]

In this section we shall use:
\begin{prop}[\cite{RoSe2} Proposition 4.7]\label{span}
$A=\ff a+\ff c+\ff x+\ff y+\ff z=\ff b+\ff c'+\ff x'+\ff y'+\ff z'.$
\end{prop}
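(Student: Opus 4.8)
The plan is to set $W := \ff a + \ff c + \ff x + \ff y + \ff z$ and prove $W = A$. First I note that the eigenvector decomposition of $b$ with respect to $a$ reads $b = \ga_b a + c + x + y + z$, so $b \in W$, while $a \in W$ trivially. Since $A = \langle a, b\rangle$ is by hypothesis the smallest subalgebra containing $a$ and $b$, it suffices to show that $W$ is a subalgebra, i.e. $W\cdot W \subseteq W$; then $A \subseteq W \subseteq A$. (Mere invariance of $W$ under $L_a, R_a, L_b, R_b$ would not suffice here, as the algebra is non-associative: a product of two length-two monomials need not be reachable by multiplying an element of $W$ by a single generator.) The second displayed equality then follows by the symmetric argument, interchanging the roles of $a$ and $b$, with $c', x', y', z'$ replacing $c, x, y, z$.

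To verify $W\cdot W\subseteq W$ I would compute the products of the spanning vectors $a, c, x, y, z$. The nine products having $a$ as a factor are immediate from the memberships $c \in A_{0,0}(a)$, $x\in A_{\gl,0}(a)$, $y\in A_{0,\gd}(a)$, $z\in A_{\gl,\gd}(a)$: one has $a\cdot a = a$, $a\cdot c = c\cdot a = 0$, $a\cdot x = \gl x$, $x\cdot a = 0$, $a\cdot y = 0$, $y\cdot a = \gd y$, $a\cdot z = \gl z$, $z\cdot a = \gd z$, each a scalar multiple of a spanning vector. This leaves the sixteen products $u\cdot v$ with $u,v\in\{c,x,y,z\}$.

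For these sixteen I would first apply the $\zz_2\times\zz_2$-grading of Definition~\ref{maind}, which confines each product to a single graded component: for instance $x\cdot y, y\cdot x \in A_{\gl,\gd}(a)$, $x\cdot z, z\cdot x \in A_{0,\gd}(a)$, $y\cdot z, z\cdot y \in A_{\gl,0}(a)$, and $c\cdot c, x\cdot x, y\cdot y, z\cdot z \in \ff a \oplus A_{0,0}(a)$, with the remaining cases handled the same way. The goal is to upgrade each such membership to the statement that the product lies in the one-dimensional span of the matching vector ($\ff z$, $\ff y$, $\ff x$, or $\ff a + \ff c$, respectively). Note that the grading alone is not enough, since it only places a product in an eigenspace that is a priori of dimension exceeding one; primitivity gives $A_{1,1}(a) = \ff a$ but says nothing about the $0$- and $\gl$-eigenspaces.

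The remaining step, which I expect to be the main obstacle, is precisely this last upgrade, and it is where the second axis must enter. Here I would expand and project the idempotent identity $b\cdot b = b$ onto each graded component to extract linear relations among the sixteen products, bring in the eigenvalue relations for $L_b$ and $R_b$ coming from the symmetric decomposition $a = \gb_a b + c' + x' + y' + z'$, and invoke primitivity of both $a$ and $b$ to resolve the $A_{0,0}$-ambiguity in the $(+,+)$-components; solving the resulting system should express every product of $c, x, y, z$ as a scalar combination of $a, c, x, y, z$. I anticipate no purely conceptual shortcut past this coupling of the two eigenstructures—the content is essentially that the interplay of the two axes forces each off-unit eigenspace to contribute only its single distinguished vector—and it is exactly the computation carried out in \cite[Proposition~4.7]{RoSe2}.
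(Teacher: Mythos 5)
Note first that the paper contains no proof of this proposition at all: it is imported verbatim from \cite[Proposition~4.7]{RoSe2}, so the only comparison available is between your outline and the identities the present paper actually records. Your framing is sound and surely matches the shape of any correct proof: setting $W=\ff a+\ff c+\ff x+\ff y+\ff z$, observing $a,b\in W$, insisting that $W$ be closed under multiplication (your remark that mere invariance under $L_a,R_a,L_b,R_b$ is insufficient in a nonassociative algebra is a genuinely good catch), disposing of the products involving $a$ by the eigenvector memberships, and recognizing that the $\zz_2\times\zz_2$-grading only confines each of the sixteen remaining products to a graded component whose dimension is a priori unknown.

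The genuine gap is at the step you yourself flag as the main obstacle: the linear system you propose to solve is singular, so ``solving the resulting system'' cannot deliver the conclusion from the inputs you name. Concretely, in the $(\gl,0)$-component the unknowns are $cx$, $xc$, $yz$, $zy$, and the identities you invoke --- the projection of $b\cdot b=b$ and the relations coming from the decomposition of $a$ with respect to $b$ (via \cite[Lemma~2.3(3)]{RoSe1}), together with $b(ab)=(ba)b$ from $L_bR_b=R_bL_b$ --- are exactly the four equations recorded here as Lemma~\ref{b^2}(2), Lemma~\ref{bab}(2), \eqref{6-92} and \eqref{6-96}. But these are linearly dependent: the left-hand side of Lemma~\ref{bab}(2) equals $\gl$ times that of Lemma~\ref{b^2}(2) minus those of \eqref{6-92} and \eqref{6-96}, so the system has rank at most $3$ in $4$ unknowns, and one cannot conclude that each individual product (say $zy$) lies in $\ff x$. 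Indeed, the present paper exploits precisely this dependency in the opposite direction: the consistency condition of the degenerate system is the scalar identity of Proposition~\ref{dim51}(1), not a spanning statement. The same degeneracy occurs in the $(0,\gd)$-component. So a correct proof must adjoin further identities (e.g.\ from longer words in $a,b$, or from playing the $a$- and $b$-decompositions against each other beyond the relations you list), which is presumably what \cite[Proposition~4.7]{RoSe2} does; your closing appeal to that citation concedes the decisive computation rather than performing it, and as specified your plan would stall exactly there.
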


\begin{remark}\label{eg}
Suppose that $a$ and $b$ have the same type $(\gl,\gd),$ and that $\dim(A)\le 3.$ Then either
\begin{enumerate}
\item
$\dim(A)=2,$ so $A=\ff a+\ff b.$ Furthermore $\gl\ne\gd,$  $\gl+\gd=1,$ $ab=\gd a+\gl b$ and $ba=\gd b+\gl a,$ or

\item
$A$ is commutative.
\end{enumerate}
\end{remark}
\begin{proof}
This follows from \cite[Theorem A]{RoSe1} and \cite[Theorems A and B]{RoSe2}.
\end{proof}
Note that by Proposition \ref{span}
we have: 
\begin{equation}\label{4-1}
\text{If $\dim(A)=4,$ then exactly one of $c', x', y', z'$ is $0.$}
\end{equation}
and
\begin{equation}\label{4-2}
\text{If $\dim(A)=4,$ then exactly one of $c, x, y, z$ is $0.$}
\end{equation}

We start with a number of computations.

%
\begin{lemma}\label{bab}$ $
\begin{enumerate}
\item
$\ga_{bab} a+(bab)_{0,0}=\ga_b^2a+ \gl x^2 +\gl z^2 = \ga_b^2a+ \gd y^2 + \gd z^2.$

\item
$(bab)_{\gl,0}=\ga_b \gl^2 x+\gl cx+\gl yz = \gd yz+\gd zy+ \ga_b\gl x.$

\item
$(bab)_{0,\gd}=\gl xz +\gl zx + \ga_b \gd y  = \ga_b\gd^2 y + \gd yc+\gd zx.$

\item
$(bab)_{\gl,\gd}=\ga_b(\gl^2+\gd) z +\gl cz +\gl yx
= \ga_b(\gl +\gd^2) z+\gd z c + \gd yx.$
\end{enumerate}
\end{lemma}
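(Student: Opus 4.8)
The plan is to compute the single element $bab$ and extract its four graded components with respect to $a$. The first thing I would check is that $bab$ is unambiguous: since $b$ is itself a primitive axis we have $L_bR_b=R_bL_b$, and applying this operator identity to $a$ gives $b(ab)=(ba)b$. This is precisely what produces two expressions in each part---the first arising from evaluating $b(ab)$ and the second from $(ba)b$---and the equality between them is just the operator identity for $b$.

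I would begin by recording the one-sided products. Writing $b=\ga_b a+c+x+y+z$ with $c\in A_{0,0}$, $x\in A_{\gl,0}$, $y\in A_{0,\gd}$, $z\in A_{\gl,\gd}$, the eigenvalue relations $a\cdot x=\gl x$, $a\cdot z=\gl z$, $y\cdot a=\gd y$, $z\cdot a=\gd z$ (together with $a\cdot a=a$ and $a\cdot c=c\cdot a=0$) give
\[
ab=\ga_b a+\gl x+\gl z,\qquad ba=\ga_b a+\gd y+\gd z.
\]
For the first set of expressions I then expand $b(ab)=\ga_b(ba)+\gl\,bx+\gl\,bz$, and for the second $(ba)b=\ga_b(ab)+\gd\,yb+\gd\,zb$, where each of $bx,bz,yb,zb$ is multiplied out into products of the five components of $b$.

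The decisive step is to route every resulting product into its correct graded piece by the $\zz_2\times\zz_2$ fusion rules, reading the first sign from the left eigenvalue ($+$ for $\{0,1\}$, $-$ for $\gl$) and the second from the right eigenvalue ($+$ for $\{0,1\}$, $-$ for $\gd$). Thus, for instance, $cx\in A_{\gl,0}$, $yz\in A_{\gl,0}$, $zx\in A_{0,\gd}$, $xz\in A_{0,\gd}$, $yx\in A_{\gl,\gd}$, $cz\in A_{\gl,\gd}$, whereas the squares $x^2,y^2,z^2$ all land in the $++$-part $A_{1,1}\oplus A_{0,0}$. Collecting terms then yields parts (1)--(4) as the $++$, $-+$, $+-$, and $--$ components respectively, and equating the $b(ab)$ and $(ba)b$ answers gives the two-sided equalities in each part.

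The calculation itself is routine; the only genuine obstacle is the bookkeeping of the grading, since a single product placed in the wrong eigenspace---for example confusing the landing spaces of $yx\in A_{\gl,\gd}$ and $yz\in A_{\gl,0}$, which differ only in the second sign---would corrupt a component. Once the placement of each of the mixed products is fixed, the four displayed identities drop out directly, with no further input needed.
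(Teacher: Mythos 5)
Your proposal is correct and follows essentially the same route as the paper: expand $b(ab)$ and $(ba)b$ using $ab=\ga_b a+\gl x+\gl z$ and $ba=\ga_b a+\gd y+\gd z$, route each product into its graded piece via the $\zz_2\times\zz_2$ fusion rules, and equate components, the two expressions in each part coming from the two orders of multiplication. Your explicit observation that $b(ab)=(ba)b$ follows from $L_bR_b=R_bL_b$ applied to $a$ (since $b$ is a two-sided axis) is exactly the justification the paper leaves implicit, and all your eigenspace placements ($cx,yz\in A_{\gl,0}$; $xz,zx\in A_{0,\gd}$; $cz,yx\in A_{\gl,\gd}$; squares in the $++$-part) are correct.
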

\begin{proof}
We have
\begin{equation*}
\begin{aligned}
b(ab)= &  (\ga_b a+b_{0,0}+b_{\gl,0}+b_{0,\gd}+b_{\gl,\gd})(\ga_ba + \gl b_{\gl,0} +\gl b_{\gl,\gd}) \\
= &  \ga_b^2 a+\ga_b\gl^2b_{\gl,0}+\ga_b\gl^2 b_{\gl,\gd}\\
+&\gl b_{0,0}b_{\gl,0} +\gl  b_{0,0}b_{\gl,\gd}\\
+& \gl b_{\gl,0}^2+ \gl b_{\gl,0}b_{\gl,\gd} \\
+& \ga_b \gd b_{0,\gd}+ \gl b_{0,\gd}b_{\gl,0} + \gl b_{0,\gd}b_{\gl,\gd}\\
+& \ga_b \gd b_{\gl,\gd}+\gl b_{\gl,\gd}b_{\gl,0} +\gl
b_{\gl,\gd}^2.
\end{aligned}
\end{equation*}
And
\begin{equation*}
\begin{aligned}
(ba)b =  & (\ga_ba + \gd b_{0,\gd} +\gd b_{\gl,\gd})(\ga_b a+b_{0,0}+b_{\gl,0}+b_{0,\gd}+b_{\gl,\gd})\\
= &  \ga_b ^2 a+\ga_b \gd^2b_{0,\gd}+\ga_b\gd^2 b_{\gl,\gd}\\
+&\gd b_{0,\gd}b_{0,0} +\gd b_{\gl,\gd} b_{0,0}\\
+&\ga_b \gl b_{\gl,0} + \gd b_{0,\gd}b_{\gl,0}+\gd  b_{\gl,\gd}b_{\gl,0} \\
+& \gd b_{0,\gd}^2 + \gd b_{\gl,\gd}b_{0,\gd}\\
+& \ga_b \gl b_{\gl,\gd}+\gd b_{0,\gd}b_{\gl,\gd} +\gd b_{\gl,\gd}^2,
\end{aligned}
\end{equation*}
which is $b(ab)$, so equating, fusion rules give
\begin{equation}\label{813}
\begin{aligned}
&\ga_b^2a+ \gl b_{\gl,0}^2 +\gl b_{\gl,\gd}^2 = \ga_b^2a+ \gd b_{0,\gd}^2 + \gd b_{\gl,\gd}^2, \text{ or}\\
&\ga_{bab} a+(bab)_{0,0}=\ga_b^2a+ \gl x^2 +\gl z^2 = \ga_b^2a+ \gd y^2 + \gd z^2.
\end{aligned}
\end{equation}
\begin{equation}\label{823}
\begin{aligned}
&\ga_b \gl^2 b_{\gl,0}+\gl b_{0,0}b_{\gl,0}+\gl b_{0,\gd}b_{\gl,\gd} = \gd b_{0,\gd}b_{\gl,\gd}+\gd b_{\gl,\gd}b_{0,\gd}+\ga_b \gl b_{\gl,0},\text{ or}\\
&(bab)_{\gl,0}=\ga_b \gl^2 x+\gl cx+\gl yz = \gd yz+\gd zy+ \ga_b\gl x.
\end{aligned}
 \end{equation}
 \begin{equation}\label{833}
\begin{aligned}
&\gl b_{\gl,0}b_{\gl,\gd} +\gl b_{\gl,\gd}b_{\gl,0} + \ga_b \gd
b_{0,\gd}  = \ga_b\gd^2 b_{0,\gd} + \gd b_{0,\gd}b_{0,0}+\gd
b_{\gl,\gd}b_{\gl,0},\text {or,}\\
&(bab)_{0,\gd}=\gl xz +\gl zx + \ga_b \gd y  = \ga_b\gd^2 y + \gd yc+\gd zx.
\end{aligned}
\end{equation}
 \begin{equation}\label{843}
\begin{aligned}
\ga_b(\gl^2+\gd) b_{\gl,\gd} &+\gl b_{0,0}b_{\gl,\gd} +\gl b_{0,\gd}b_{\gl,0}\\
&= \ga_b(\gl +\gd^2) b_{\gl,\gd}+\gd b_{\gl,\gd} b_{0,0} + \gd b_{0,\gd}b_{\gl,0},
\end{aligned}
\end{equation}
or
\begin{equation}\label{21}
(bab)_{\gl,\gd}=
\ga_b(\gl^2+\gd) z +\gl cz +\gl yx
= \ga_b(\gl +\gd^2) z+\gd z c + \gd yx.
\end{equation}
\end{proof}
 
\begin{lemma}\label{b^2}$ $
\begin{enumerate}
\item
$\ga_b a+c =  \ga _b^2 a +c^2+x^2+y^2+z^2.$

\item
$x=  \ga _b \gl x + x c+ cx+yz+ zy.$

\item
$y=   \ga _b  \gd y + cy +yc+  xz+z x.$

\item
$z= \ga _b(\gl+\gd) z +cz  + zc  +xy + yx.$
\end{enumerate}
\end{lemma}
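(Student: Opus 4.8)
The plan is to exploit that $b$ is itself an idempotent, so $b\cdot b = b$, and to compute the left-hand side by expanding $b = \ga_b a + c + x + y + z$ and reading off the four graded components of the resulting identity, exactly in the spirit of Lemma \ref{bab}. The $\zz_2\times\zz_2$-grading of $A$ determined by $a$ (Definition \ref{maind}(3)) is the engine: it records into which of the four pieces $A_{1,1}\oplus A_{0,0}$, $A_{\gl,0}$, $A_{0,\gd}$, $A_{\gl,\gd}$ each bilinear product lands, so that equating components of $b^2$ and $b$ will produce the four stated equations.

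First I would record the action of $a$ on the summands, which is immediate from the memberships $c\in A_{0,0}$, $x\in A_{\gl,0}$, $y\in A_{0,\gd}$, $z\in A_{\gl,\gd}$:
\[
a\cdot a = a,\quad a\cdot c = c\cdot a = 0,\quad a\cdot x = \gl x,\quad x\cdot a = 0,\quad a\cdot y = 0,\quad y\cdot a = \gd y,\quad a\cdot z = \gl z,\quad z\cdot a = \gd z.
\]
Substituting into $b^2$ and collecting the scalar multiples of $a$ together with the terms linear in $c,x,y,z$ yields the ``diagonal'' contribution $\ga_b^2 a + \ga_b\gl x + \ga_b\gd y + \ga_b(\gl+\gd)z$, where the coefficient of $z$ picks up both $\gl$ (from $a\cdot z$) and $\gd$ (from $z\cdot a$).

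Next I would sort the remaining bilinear products among $c,x,y,z$ by grade, where $c$ is even--even, $x$ is odd--even, $y$ is even--odd and $z$ is odd--odd. The squares $c^2, x^2, y^2, z^2$ all have trivial grade and hence lie in $A_{1,1}\oplus A_{0,0}$; the pairs $\{cx, xc\}$ and $\{yz, zy\}$ lie in $A_{\gl,0}$; the pairs $\{cy, yc\}$ and $\{xz, zx\}$ lie in $A_{0,\gd}$; and the pairs $\{cz, zc\}$ and $\{xy, yx\}$ lie in $A_{\gl,\gd}$. Equating the $A_{1,1}\oplus A_{0,0}$, $A_{\gl,0}$, $A_{0,\gd}$ and $A_{\gl,\gd}$ components of $b^2$ with those of $b = \ga_b a + c + x + y + z$ then gives parts (1), (2), (3) and (4) respectively.

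There is no genuine obstacle here; the argument is a direct analogue of Lemma \ref{bab} with the triple product $bab$ replaced by $b^2$ and with the idempotence $b^2=b$ supplying the right-hand sides. The only points demanding care are the grading bookkeeping (so that all four squares $c^2, x^2, y^2, z^2$ are correctly placed in the even--even piece feeding part (1), and the off-diagonal products are distributed without error) and the observation that the linear $z$-term acquires the sum $\gl+\gd$ precisely because $a$ acts on $z$ as $\gl$ on the left and as $\gd$ on the right.
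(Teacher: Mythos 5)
Your proposal is correct and is essentially the paper's own proof: the paper likewise expands $b=b^2=(\ga_b a+c+x+y+z)^2$, collects the cross terms with $a$ (giving $\ga_b\gl(x+z)+\ga_b\gd(y+z)$, i.e.\ exactly your diagonal contribution with the $z$-coefficient $\gl+\gd$), and reads off the four identities from the fusion rules. Your grading bookkeeping (all squares in the $++$ part, $\{cx,xc,yz,zy\}$ in $A_{\gl,0}$, $\{cy,yc,xz,zx\}$ in $A_{0,\gd}$, $\{cz,zc,xy,yx\}$ in $A_{\gl,\gd}$) matches the paper's computation exactly.
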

\begin{proof}
We have
  \begin{equation*}\label{881}
    \begin{aligned} b  = & b^2 \\
= &(\ga _b a +c+x+y+z)^2 \\
= &\ga _b^2 a +c^2+x^2+y^2+z^2\\
+& \ga _b \gd (y+z) + \ga _b \gl (x+z)\\
+&cx+xc+cy+yc+cz+zc\\
+&xy+yx+xz+zx+yz+zy.
\end{aligned}
\end{equation*}
The lemma follows from the fusion rules.
\end{proof}

\begin{lemma}\label{abb}$ $
\begin{enumerate}
\item
We have
\begin{equation}\label{6-91}
\gd(y^2 +z^2) =
\ga_b(\gb_a(1-\gl')+(\gl'-\ga_b))a+\gb_a(1-\gl')c,
\end{equation}
\begin{equation}\label{6-92}
\gd(yz+zy)=\gb_a(1-\gl')x,
\end{equation}
\begin{equation}\label{6-93}
\gd(cy+xz) = (\gb_a(1-\gl')+\gd(\gl'-\ga_b))y,
\end{equation}
\begin{equation}\label{6-94}
\gd(cz+xy)=(\gb_a(1-\gl')+\gd(\gl'-\ga_b-\ga_b\gl))z.
\end{equation}
\item
We have
\begin{equation}\label{6-95}
\gl (x^2+z^2)=\ga_b(\gb_a(1-\gd')+(\gd'-\ga_b))a+\gb_a(1-\gd')c,
\end{equation}
\begin{equation}\label{6-96}
\gl(xc+zy)=(\gb_a(1-\gd')+\gl(\gd'-\ga_b))x,
\end{equation}
\begin{equation}\label{6-97}
\gl (xz+zx)=\gb_a(1-\gd')y,
\end{equation}
\begin{equation}\label{6-98}
\gl(zc+ xy)=(\gb_a(1-\gd')+\gl (\gd'-\ga_b-\gd\ga_b))z.
\end{equation}
\end{enumerate}
\end{lemma}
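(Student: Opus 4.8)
The plan is to derive each of the two families of four identities by computing a single triple product in two different ways and then comparing $a$-eigencomponents. For part (1) the relevant product is $b(ba)$, and for part (2) it is $(ab)b$; these are mirror images of one another under the left-right symmetry of the setup (interchange $L$ with $R$, $\gl$ with $\gd$, and $\gl'$ with $\gd'$), so I would carry out part (1) in detail and obtain part (2) by the analogous argument.

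First I would evaluate $b(ba)$ using the eigenstructure of $a$ relative to $b$. Writing $a=\gb_a b+c'+x'+y'+z'$, the element $ba=L_b(a)$ is obtained by annihilating $c'$ and $y'$ (which have left $b$-eigenvalue $0$) and scaling $x'$ and $z'$ (left $b$-eigenvalue $\gl'$) by $\gl'$, so that $ba-\gb_a b\in A_{\gl'}(L_b)$. Applying $L_b$ once more multiplies this difference again by $\gl'$, i.e.\ $b(ba)-\gb_a b=\gl'(ba-\gb_a b)$, which rearranges to the linear collapse
\[
b(ba)=\gl'\, ba+\gb_a(1-\gl')\, b
\qquad\text{and, symmetrically,}\qquad
(ab)b=\gd'\, ab+\gb_a(1-\gd')\, b .
\]
Here nothing but the primitivity of $b$ (so that its $1$-eigenspace is exactly $\ff b$) is used; no nonassociativity and no division enter.

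Next I would expand $b(ba)$ directly in the $a$-eigencomponents, exactly in the style of the proof of Lemma~\ref{bab}. Using $b=\ga_b a+c+x+y+z$ and $ba=R_a(b)=\ga_b a+\gd y+\gd z$, I multiply out the product, discard the three terms killed by $L_a$ or $R_a$, and sort what remains by the $\zz_2\times\zz_2$-grading: thus $y^2,z^2$ land in the $++$ part, $yz,zy$ in the $-+$ part $A_{\gl,0}$, $cy,xz$ in the $+-$ part $A_{0,\gd}$, and $cz,xy$ in the $--$ part $A_{\gl,\gd}$. Equating this graded expansion with $\gl'\, ba+\gb_a(1-\gl')\, b$ component by component yields precisely \eqref{6-91}--\eqref{6-94}; the mirror expansion of $(ab)b$, with $ab=L_a(b)=\ga_b a+\gl x+\gl z$, against $\gd'\, ab+\gb_a(1-\gd')\, b$ yields \eqref{6-95}--\eqref{6-98}.

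The conceptual content is entirely in the collapse identities above; the real work, and the only place where care is needed, is the bookkeeping of the direct expansion. The hard part will be the $A_{\gl,\gd}$-components, \eqref{6-94} and \eqref{6-98}: there both $\ga_b a\cdot z$ and $z\cdot\ga_b a$ contribute, and one must keep straight that $L_a z=\gl z$ while $R_a z=\gd z$; it is exactly this two-sided distinction that produces the extra $-\ga_b\gl$ (respectively $-\ga_b\gd$) correction inside the coefficient of $z$. Once the grading assignments and the one-sided eigenvalues are tracked correctly, matching coefficients across the four graded pieces is routine, and the eight equations drop out.
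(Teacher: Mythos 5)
Your proposal is correct and takes essentially the same route as the paper: the paper invokes the collapse identities $b(ba)=\gb_a(1-\gl')b+\gl'(ba)$ and $(ab)b=\gb_a(1-\gd')b+\gd'(ab)$ by citing [RS1, Lemma 2.3(3)] (which you merely re-derive, correctly, from the primitivity of $b$), then expands against the decomposition with respect to $a$ and matches the $\zz_2\times\zz_2$-graded components via the fusion rules, exactly as you do. The only cosmetic difference is that the paper first cancels the $\ga_b a$ term to isolate $\gd\, b(y+z)$, resp.\ $\gl(x+z)b$, before expanding, whereas you expand $b(ba)$ and $(ab)b$ wholesale; the bookkeeping, including the two-sided contributions $\ga_b\gl$ and $\ga_b\gd$ on the $z$-component in \eqref{6-94} and \eqref{6-98}, comes out identically.
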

\begin{proof}
By \cite[Lemma 2.3(3)]{RoSe1},
\[
b(ba)=\gb_a(1-\gl')b+\gl' (ba).
\]
so
\[
b(\ga_b a+\gd b_{0,\gd}+\gd b_{\gl,\gd})=\gb_a(1-\gl')b+\gl' (ba),
\]
or
\begin{equation}\label{eq bba}
\gd b(y+z)=\gb_a(1-\gl')b+(\gl'-\ga_b)(\ga_ba+\gd(y+z)).
\end{equation}
And also,
\[
(ab)b=\gb_a(1-\gd')b+\gd'ab,
\]
so
\begin{equation}\label{eq abb}
(\gl x+\gl z)b=\gb_a(1-\gd')b+(\gd'-\ga_b)(\ga_ba+\gl x+\gl z).
\end{equation}
\medskip

\noindent
(1) From equation \eqref{eq bba} we get
\[
\gd (\ga_ba+c+x+y+z)(y+z)=\gb_a(1-\gl')b+(\gl'-\ga_b)(\ga_ba+\gd y+\gd z).
\]
Hence
\begin{equation}\begin{aligned}
&\gd(y^2+z^2+yz+zy+cy+xz+cz+xy+\ga_b \gl z)\\
&=\gb_a(1-\gl')b+(\gl'-\ga_b)(\ga_ba +\gd y+\gd z)\\
&=\gb_a(1-\gl')(\ga_ba+c+x+y+z)+(\gl'-\ga_b)(\ga_ba +\gd y+\gd z)
\end{aligned}\end{equation}
Matching components via the fusion rules yields (1).
\bigskip

\noindent (2)  
By \eqref{eq abb} we have
\[
(\gl x+\gl z)(\ga_ba+c+x+y+z)=\gb_a(1-\gd')b+(\gd'-\ga_b)(\ga_ba+\gl x+\gl z)
\]
so
\[
\begin{aligned}
&\gl ((x^2+z^2)+xc+zy+(xz+zx)+\gd\ga_bz+zc+xy)\\
&=\gb_a(1-\gd')(\ga_ba+c+x+y+z)+(\gd'-\ga_b)(\ga_ba+\gl x+\gl z).
\end{aligned}
\]
Matching components via the fusion rules yields (2).
\end{proof}

\begin{lemma}\label{6}
Let $\gs:=ab-\gd'a-\gl b.$ Then
\begin{enumerate}
\item
$\gs =\gc a-\gl (b_{0,\gd}+b_{0,0})=\gr b-\gd'(a_{0,0}+a_{\gl',0}),$
where
\[
\gc=\ga_b(1-\gl)-\gd',\text{ and }\gr=\gb_a(1-\gd')-\gl.
\]

\item
$\gl(c^2+y^2)=\ga_b(\gc-\gr)a-\gr c.$
\end{enumerate}
\end{lemma}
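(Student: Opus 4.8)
The plan is to prove both parts by direct computation, exploiting the two eigenspace decompositions of the generators recorded at the start of the section. For part (1), I would compute the product $ab$ in two different ways. Writing $b=\ga_b a+c+x+y+z$ and applying $L_a$, whose eigenvalues on these five summands are $1,0,\gl,0,\gl$ respectively (since $a$ is idempotent, $c,y\in A_0(L_a)$, and $x,z\in A_{\gl}(L_a)$), I obtain $ab=\ga_b a+\gl x+\gl z$. Substituting into $\gs=ab-\gd' a-\gl b$ and collecting terms gives $\gs=(\ga_b(1-\gl)-\gd')a-\gl(c+y)=\gc a-\gl(b_{0,\gd}+b_{0,0})$. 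Symmetrically, writing $a=\gb_a b+c'+x'+y'+z'$ and applying $R_b$, whose eigenvalues on these summands are $1,0,0,\gd',\gd'$ (since $c',x'\in A_0(R_b)$ and $y',z'\in A_{\gd'}(R_b)$), I obtain $ab=\gb_a b+\gd' y'+\gd' z'$; substituting into $\gs$ yields $\gs=(\gb_a(1-\gd')-\gl)b-\gd'(c'+x')=\gr b-\gd'(a_{0,0}+a_{\gl',0})$. This part is entirely routine once the eigenvalues of $L_a$ and $R_b$ on each graded summand are recorded.

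For part (2), the key observation is that $c^2+y^2=(c^2+x^2+y^2+z^2)-(x^2+z^2)$, so that $\gl(c^2+y^2)$ is a difference of two quantities already available. Lemma~\ref{b^2}(1) gives $c^2+x^2+y^2+z^2=\ga_b(1-\ga_b)a+c$, hence $\gl(c^2+x^2+y^2+z^2)=\gl\ga_b(1-\ga_b)a+\gl c$, while equation~\eqref{6-95} of Lemma~\ref{abb} supplies $\gl(x^2+z^2)=\ga_b(\gb_a(1-\gd')+(\gd'-\ga_b))a+\gb_a(1-\gd')c$. Subtracting the second from the first, I expect the coefficient of $c$ to collapse to $\gl-\gb_a(1-\gd')=-\gr$ and the coefficient of $a$ to collapse to $\ga_b(\gc-\gr)$, producing the claimed identity $\gl(c^2+y^2)=\ga_b(\gc-\gr)a-\gr c$.

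The main obstacle is not conceptual but bookkeeping: in part (2) one must verify that the $a$-coefficient $\gl\ga_b(1-\ga_b)-\ga_b(\gb_a(1-\gd')+\gd'-\ga_b)$ really does equal $\ga_b(\gc-\gr)$ after expanding the abbreviations $\gc=\ga_b(1-\gl)-\gd'$ and $\gr=\gb_a(1-\gd')-\gl$; this is the single step where a sign or a stray term could slip in, and I would expand both sides fully to confirm they agree. Apart from this, both parts reduce to unwinding the $\zz_2\times\zz_2$-grading and applying already-established identities, so no new structural input beyond Lemmas~\ref{b^2} and~\ref{abb} should be needed.
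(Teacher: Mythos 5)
Your proposal is correct, and for part (2) it takes a genuinely different route from the paper. For part (1) you carry out exactly the routine calculation the paper has in mind (it simply cites \cite[Lemma 4.1]{RoSe2} and calls it easy): compute $ab=\ga_b a+\gl x+\gl z$ from the $a$-decomposition of $b$, compute $ab=\gb_a b+\gd' y'+\gd' z'$ from the $b$-decomposition of $a$, and substitute into $\gs$; your eigenvalue bookkeeping for $L_a$ and $R_b$ is right. For part (2) the paper instead multiplies $\gs$ on the right by $b$: since $(a_{0,0}+a_{\gl',0})b=0$ and $b^2=b$, part (1) gives $\gs b=\gr b$, and expanding the other expression $\gs b=(\gc a-\gl(c+y))b$ and sorting via the fusion rules, the $(\ff a+\ff c)$-component yields the claim. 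You instead write $c^2+y^2=(c^2+x^2+y^2+z^2)-(x^2+z^2)$ and subtract \eqref{6-95} from $\gl$ times Lemma \ref{b^2}(1); this is legitimate, since both ingredients are established before Lemma \ref{6}, so there is no circularity. The single coefficient identity you flag does check out: the $c$-coefficient is $\gl-\gb_a(1-\gd')=-\gr$, and the $a$-coefficient is $\gl\ga_b(1-\ga_b)-\ga_b\bigl(\gb_a(1-\gd')+\gd'-\ga_b\bigr)=\ga_b\bigl[\gl+\ga_b-\gl\ga_b-\gd'-\gb_a(1-\gd')\bigr]=\ga_b(\gc-\gr)$, as required. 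Comparing the two: your derivation is more modular, needing no new product expansions at all, only a linear combination of two previously displayed identities plus scalar arithmetic; the paper's computation requires expanding $(c+y)(\ga_b a+c+x+y+z)$ and invoking the $\zz_2\times\zz_2$-grading, but it is self-contained given part (1) (it does not import Lemmas \ref{b^2} or \ref{abb}) and incidentally produces the $x$-, $y$-, and $z$-component identities of $\gs b$ as unused by-products.
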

\begin{proof}
(1):\quad Is taken from \cite[Lemma 4.1]{RoSe2}, and is easy to calculate.
\medskip

\noindent
(2)
$\gr b=\gs b=(\gc a-\gl (c+y))b=\gc(\ga_b a+\gl x+\gl z)-\gl(c+y)b.$ So
\begin{align*}
&\ga_b(\gr-\gc)a+\gr c+(\gr-\gc\gl)x+\gr y+(\gr-\gc\gl)z\\
&=-\gl(c+y)(\ga_ba+c+x+y+z)\\
&=-\gl\Big(c^2+y^2+(cx+yz)+(cy+yc+\ga_b\gd y)+(cz+yx)\Big).
\end{align*}
Comparing the $\big(\ff a+\ff c\big)$-component we get (2).
\end{proof}

\begin{lemma}\label{Yoav1}$ $
If $\dim(A)=4$ and one of $x,y$ or $z$  is $0,$ then
$\gb_a=0.$
\end{lemma}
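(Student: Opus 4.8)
The plan is to extract $\gb_a=0$ directly from the two ``off-diagonal'' relations in Lemma~\ref{abb}, namely \eqref{6-92} and \eqref{6-97}, once I know which coordinates are nonzero. The crucial structural input is \eqref{4-2}: since $\dim(A)=4$, exactly one of $c,x,y,z$ is zero. By hypothesis that zero coordinate lies among $x,y,z$, so the other two of $x,y,z$ (as well as $c$) are nonzero. I will also use that $b$ has type $(\gl',\gd')$ with $\gl',\gd'\notin\{0,1\}$, so that $1-\gl'$ and $1-\gd'$ are invertible in $\ff$; these are the scalars I intend to cancel.

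First I would treat the cases $y=0$ and $z=0$ together via \eqref{6-92}, which states $\gd(yz+zy)=\gb_a(1-\gl')x$. In both cases the left-hand side is $0$, while $x\ne 0$ by the preceding remark, so cancelling the nonzero factor $(1-\gl')x$ forces $\gb_a=0$. The remaining case $x=0$ is the mirror image, handled by \eqref{6-97}, namely $\gl(xz+zx)=\gb_a(1-\gd')y$: its left-hand side vanishes because $x=0$, and now $y\ne 0$ (it is not the unique zero coordinate), so $\gb_a(1-\gd')y=0$ again yields $\gb_a=0$.

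The proof is short, and the only place demanding attention is making sure that in each case the ``partner'' variable multiplying $\gb_a$ --- namely $x$ when using \eqref{6-92} and $y$ when using \eqref{6-97} --- is nonzero, since otherwise the equations would be vacuous. This is precisely where the hypothesis $\dim(A)=4$ does its work through \eqref{4-2}, which confines the vanishing to a single coordinate; without it (for instance in dimension $3$, where two coordinates may vanish) the argument would break down. Accordingly I anticipate no serious obstacle beyond correctly pairing each vanishing coordinate with the equation whose right-hand side survives.
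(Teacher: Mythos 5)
Your proof is correct and follows essentially the same route as the paper: the paper's argument also extracts the $(\gl,0)$-component of $b(ba)=\gb_a(1-\gl')b+\gl'(ba)$ and the $(0,\gd)$-component of $(ab)b=\gb_a(1-\gd')b+\gd'(ab)$ in each case, which are exactly the identities \eqref{6-92} and \eqref{6-97} you cite, and likewise uses \eqref{4-2} to guarantee the surviving coordinate ($x$, resp.\ $y$) is nonzero. Your version is a mild streamlining, since quoting Lemma \ref{abb} avoids redoing the expansions case by case.
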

\begin{proof}
Assume first that $z= 0$. Then, by equation \eqref{eq bba},
\[
\gd by=\gd(\ga_ba+c+x+y)y=\gb_a(1-\gl')b+(\gl'-\ga_b)(\ga_b+\gd y).
\]
Hence (since $xy=0$)
\[
\gd(cy+y^2)=\gb_a(1-\gl')b+(\gl'-\ga_b)(\ga_ba+\gd y).
\]
In the LHS $b_{\gl,0}$ has coefficient $0.$  Hence, in the RHS
$\gb_a(1-\gl') x=0$.  Thus $\gb_a=0.$
\medskip

Assume next that $y= 0$. Then, by \eqref{4-2}, $x\ne 0.$
By  \eqref{eq bba},
\[
\gd b z=\gd(\ga_ba+c+x+z)z=\gb_a(1-\gl')b+(\gl'-\ga_b)(\ga_ba+\gd z).
\]
Hence (since $xz=0$)
\[
 \gd(\ga_b \gl z+cz+z^2)=\gb_a(1-\gl')b+(\gl'-\ga_b)(\ga_ba+\gd z).
\]
In the LHS $x$ has coefficient $0.$  Hence, in the RHS
$\gb_a(1-\gl') x=0$.  Thus $\gb_a=0.$
\medskip

Suppose $x=0.$  Then, by \eqref{4-2}, $y\ne 0.$ By \eqref{eq abb},
\[
\gl zb=\gl z(\ga_ba+c+y+z)=\gb_a(1-\gd')b+(\gd'-\ga_b)(\ga_ba+\gl z),
\]
so (since  $zy=0$),
\[
\gl(\ga_b\gd z+zc+z^2)=\gb_a(1-\gd')b+(\gd'-\ga_b)(\ga_ba+\gl z).
\]
In the RHS $y$ has coefficient $0$ so $\gb_a(1-\gd')=0,$ and again $\gb_a=0.$
\end{proof}

\begin{prop}\label{dim51}$
$
\begin{enumerate}
\item
$\big(\gb_a(1-\gd')+\gb_a(1-\gl')\big)x=\gl(1-\gd')x.$

\item
$\big(\gb_a(1-\gl')+\gb_a(1-\gd'))\big)y=\gd(1-\gl')y.$

\item
If $\gb_a=0,$ then $x=y=0.$

\item
If $\dim(A)\ge 4,$ then $x,y,z, x',y',z' $ are all distinct from $0.$

\item If $\dim(A)\ge 4,$ $\gl'=\gl,$ and $\gd'=\gd,$ then 
\[
\gl'=\gd'=\gl=\gd=2\ga_b=2\gb_a.
\]
\end{enumerate}
\end{prop}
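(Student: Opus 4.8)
The plan is to prove the five assertions in order: (1) and (2) by an organized eigenvector computation, (3) as an immediate corollary, and (4)--(5) by combining (1)--(3) with the dimension bookkeeping in \eqref{4-1}--\eqref{4-2}, Lemma~\ref{Yoav1}, and the $a\leftrightarrow b$ symmetry of the hypotheses.

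For (1), I would start from the $(\gl,0)$-relation in Lemma~\ref{b^2}(2), rewritten as $(1-\ga_b\gl)x=(xc+cx)+(yz+zy)$, and regroup the right-hand side as $(xc+zy)+(cx+yz)$. The point of this regrouping is that each new term is an explicit multiple of $x$: the quantity $\gl(xc+zy)$ is read off directly from \eqref{6-96}, while $\gl(cx+yz)$ is obtained by substituting the value of $\gd(yz+zy)$ from \eqref{6-92} into the $(\gl,0)$-component identity of Lemma~\ref{bab}(2). Multiplying the Lemma~\ref{b^2}(2) relation by $\gl$ and inserting both expressions gives $(\gl-\ga_b\gl^2)x=\big(\gb_a(1-\gd')+\gb_a(1-\gl')+\gl\gd'-\ga_b\gl^2\big)x$; the $-\ga_b\gl^2x$ terms cancel and the remainder rearranges to exactly (1). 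Assertion (2) is the left--right mirror image, obtained from Lemma~\ref{b^2}(3), \eqref{6-93}, \eqref{6-97}, and the $(0,\gd)$-component identity of Lemma~\ref{bab}(3).

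Granting (1) and (2), the rest is formal. For (3), putting $\gb_a=0$ in (1) yields $\gl(1-\gd')x=0$; since $\gl,\gd'\notin\{0,1\}$ the scalar $\gl(1-\gd')$ is nonzero, so $x=0$, and symmetrically (2) gives $y=0$. For (4) I would split into cases. If $\dim(A)=5$ then by Proposition~\ref{span} both $\{a,c,x,y,z\}$ and $\{b,c',x',y',z'\}$ must be bases, so none of $c,x,y,z,c',x',y',z'$ vanishes. If $\dim(A)=4$ then \eqref{4-2} says exactly one of $c,x,y,z$ is $0$; were that one among $x,y,z$, Lemma~\ref{Yoav1} would force $\gb_a=0$ and then (3) would give $x=y=0$, i.e.\ two vanishing members --- a contradiction; hence $x,y,z\neq0$, and the $a\leftrightarrow b$-symmetric argument (using \eqref{4-1} and the symmetric forms of Lemma~\ref{Yoav1} and (3)) gives $x',y',z'\neq0$. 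Finally for (5), I would set $\gl'=\gl$ and $\gd'=\gd$ and use $x,y\neq0$ from (4) to turn (1) and (2) into the scalar equalities $\gb_a(2-\gl-\gd)=\gl(1-\gd)$ and $\gb_a(2-\gl-\gd)=\gd(1-\gl)$; equating them forces $\gl=\gd$, whence $2\gb_a(1-\gl)=\gl(1-\gl)$ and $2\gb_a=\gl$ since $\gl\neq1$. The symmetric computation with $x'$ gives $2\ga_b=\gl$, and together with $\gl=\gl'$ and $\gd=\gd'$ this yields $\gl'=\gd'=\gl=\gd=2\ga_b=2\gb_a$.

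The step I expect to be the real obstacle is (1) (equivalently (2)): reaching it is not a single substitution but hinges on recognizing that $(xc+cx)+(yz+zy)$ should be regrouped as $(xc+zy)+(cx+yz)$, precisely so that the identities available from Lemmas~\ref{abb} and~\ref{bab} each evaluate one group as a scalar multiple of $x$. Once that regrouping is found the algebra is routine, and (3)--(5) are short deductions resting only on the nonvanishing of $\gl,\gd,1-\gl',1-\gd'$ (guaranteed since $\gl,\gd,\gl',\gd'\notin\{0,1\}$) and on the symmetry between $a$ and $b$.
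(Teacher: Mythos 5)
Your proposal is correct and takes essentially the same approach as the paper: for (1) you combine exactly the four identities the paper uses --- \eqref{6-92}, \eqref{6-96}, the $(\gl,0)$-part of Lemma~\ref{bab}(2), and Lemma~\ref{b^2}(2) --- in the same linear combination (the paper merely organizes the elimination through an auxiliary equation rather than your regrouping of $(xc+cx)+(yz+zy)$ as $(xc+zy)+(cx+yz)$), with (2) the mirror image and (3)--(5) matching the paper's deductions via Lemma~\ref{Yoav1}, \eqref{4-1}--\eqref{4-2}, Proposition~\ref{span}, and the $a\leftrightarrow b$ symmetry.
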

\begin{proof}
(1) We have
\begin{equation}\label{x1}
\gb_a(1-\gl')x=\gd(yz+zy)\qquad \text{\rm (by\eqref{6-92}),}
\end{equation}
\begin{equation}\label{x2}
\Big(\gb_a(1-\gd')+\gl(\gd'-\ga_b)\Big)x=\gl(xc+zy)\qquad \text{(by \eqref{6-96})}.
\end{equation}
\begin{equation}\label{x3}
\ga_b\gl(\gl-1)x=(\gd-\gl)yz+\gd zy-\gl cx\qquad \text{(by Lemma \ref{bab}(2))}
\end{equation}
\begin{equation}\label{x4}
(1-\ga_b\gl) x=x c+ cx+yz+ zy\qquad \text{(by Lemma \ref{b^2}(2))}.
\end{equation}
So,
\begin{equation}\label{A}
\Big(\ga_b\gl(\gl-1)-\gb_a(1-\gl')\Big)x=-\gl yz-\gl cx\ \text{(by \eqref{x1} and \eqref{x3}).}\\
\end{equation}
Hence, using \eqref{x2},\eqref{A} and \eqref{x4}), we get
\[
\Big(\gb_a(1-\gd')+\gl(\gd'-\ga_b)\Big)x-\Big(\ga_b\gl(\gl-1)-\gb_a(1-\gl')\Big)x\\=\gl(1-\ga_b\gl)x.
\]
This shows (1).
\medskip

\noindent
(2)  We have
\begin{equation}\label{y1}
\Big(\gb_a(1-\gl')+\gd(\gl'-\ga_b)\Big)y=\gd(cy+xz)\qquad\text{(by \eqref{6-93})},
\end{equation}

\begin{equation}\label{y2}
\gb_a(1-\gd')y=\gl(xz+zx)\qquad\text{(by \eqref{6-97})},
\end{equation}

\begin{equation}\label{y3}
\ga_b\gd(\gd-1) y  =\gl xz +(\gl-\gd) zx  -\gd yc\qquad\text{(by Lemma \ref{bab}(3))},
\end{equation}

\begin{equation}\label{y4}
(1-\ga_b\gd)y=cy+yc+xz+zx\qquad\text{(by Lemma \ref{b^2}(3)))}.
\end{equation}
Hence by \eqref{y2} and \eqref{y3}
\[
\Big(\ga_b\gd(\gd-1)-\gb_a(1-\gd')\big)y=-\gd zx-\gd yc.
\]
From this and \eqref{y1} and \eqref{y4}, 
\[
\Big(\gb_a(1-\gl')+\gd(\gl'-\ga_b)\Big)y-\Big(\ga_b\gd(\gd-1)-\gb_a(1-\gd')\big)y=\gd (1-\ga_b\gd)y.
\]
This shows (2).
\medskip

\noindent
(3)  Suppose $\gb_a=0,$ then the LHS of both (1) and (2) is $0.$ 
So the RHS of both (1) and (2) is $0,$ and (3) follows from (1) and (2).
\medskip

\noindent
(4) Assume that $\dim(A)\ge 4.$  If $\dim(A)=5,$ then (4) follows
from Proposition \ref{span}.  Suppose $\dim(A)=4,$ and one of $x,y,z,x',y',z'$ 
is $0,$ say $x=0.$  By Lemma \ref{Yoav1}, $\gb_a=0,$ so by  (3), also $y=0.$  This contradicts
\eqref{4-2}.  The same argument shows that (4) holds if $y=0,$ or $z=0.$ By symmetry
and \eqref{4-1}, (4) holds.  
\medskip

\noindent
(5)  Assume that $\dim(A)\ge 4.$  By (4), $x$ and $y$ are not $0.$
Then, by (1) and~(2), 
since $\gl=\gl'$ and $\gd=\gd',$ we get $\gl=\gd.$
Now by (1) (or (2)), $\gb_a=\half\gl.$  Symmetrically using (4),
$\ga_b=\half\gl.$
\end{proof}

The following two results show that the hypothesis of Proposition \ref{dim51}(5) is impossible.

\begin{lemma}\label{f}
Suppose that $\gl=\gl'=\gd=\gd'=2\ga_b=2\gb_a.$ Then
\begin{enumerate}
\item
$\half\gl(b-a)+z-c=z'-c'.$

\item
$x-y=y'-x'.$

\item
$\half\gl a+c+x-y-z=\half\gl b+c'-x'+y'-z'.$
 
\item
\begin{align*}\tag{i}
yz+zy&=\half(1-\gl)x.\\\tag{ii}
xc+zy&=\half x.\\\tag{iii}
zy- cx&=\half\gl(\gl-1)x.\\\tag{iv}
x c+ cx+yz+ zy&=(1-\half\gl^2) x.
\end{align*}

\item
\begin{align*}\tag{i}
xz+zx&=\half(1-\gl)y.\\\tag{ii}
cy+xz&=\half y.\\\tag{iii}
xz  - yc&=\half\gl(\gl-1) y.\\\tag{iv}
yc+cy+xz+zx&=(1-\half\gl^2)y.
\end{align*}

\item
$zc=cz.$

\item
$x^2=y^2.$

\item
$c^2+y^2=\half(1+\gl)c.$ 

\item
$x^2+z^2=\half\gl(1-\half\gl)a+\half(1-\gl)c.$

\item
$z^2-c^2=\half\gl(1-\half\gl)a-\gl c.$
\end{enumerate}
\end{lemma}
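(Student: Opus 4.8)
The plan is to assume the stated hypothesis throughout, so that $\gl=\gl'=\gd=\gd'$ and $\ga_b=\gb_a=\half\gl$, and to feed these equalities into the identities already established in Lemmas~\ref{bab}, \ref{b^2}, \ref{abb} and~\ref{6}; since $\gl\notin\{0,1\}$ we may cancel a factor of $\gl$ at will. Most of the ten assertions should then drop out after a one-line simplification of scalar coefficients. Concretely, I expect the four identities of~(4) to come, in order, from \eqref{6-92}, \eqref{6-96}, Lemma~\ref{bab}(2) and Lemma~\ref{b^2}(2), and the four identities of~(5) to come, in order, from \eqref{6-97}, \eqref{6-93}, Lemma~\ref{bab}(3) and Lemma~\ref{b^2}(3). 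In each case one substitutes $\gd=\gl'=\gd'=\gl$ and $\ga_b=\gb_a=\half\gl$, verifies that the bracketed scalar collapses---for instance $\gb_a(1-\gd')+\gl(\gd'-\ga_b)=\half\gl$ in \eqref{6-96}, and $\gb_a(1-\gl')+\gd(\gl'-\ga_b)=\half\gl$ in \eqref{6-93}---and then divides by $\gl$.

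For the quadratic assertions I would argue as follows. Part~(9) is \eqref{6-95} after dividing by $\gl$ and rewriting $\tfrac{\gl}{4}(2-\gl)=\half\gl(1-\half\gl)$. Under the hypothesis the right-hand sides of \eqref{6-91} and \eqref{6-95} become literally identical, so their difference gives $\gl(y^2-x^2)=0$, which is part~(7). For part~(8) I would use Lemma~\ref{6}(2): the two scalars occurring there, $\gc=\ga_b(1-\gl)-\gd'$ and $\gr=\gb_a(1-\gd')-\gl$, both equal $-\half\gl(1+\gl)$, so the coefficient $\ga_b(\gc-\gr)$ of $a$ vanishes and $-\gr c=\half\gl(1+\gl)c$. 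Part~(10) then follows by subtracting part~(8), with $y^2$ replaced by $x^2$ via part~(7), from part~(9). Finally part~(6) is immediate: \eqref{6-94} and \eqref{6-98} simplify to $cz+xy=\half(1-\gl^2)z$ and $zc+xy=\half(1-\gl^2)z$ respectively, and subtracting yields $cz=zc$.

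The genuinely different step, which I expect to be the main obstacle, is parts~(1)--(3): these are the only assertions relating the $a$-eigendecomposition $b=\ga_b a+c+x+y+z$ to the $b$-eigendecomposition $a=\gb_a b+c'+x'+y'+z'$, so none of the earlier one-axis computations can yield them. The idea is to evaluate the single product $ab$ (and likewise $ba$) in both coordinate systems by reading off eigenvalues: this gives $ab=\ga_b a+\gl(x+z)=\gb_a b+\gd'(y'+z')$ and $ba=\ga_b a+\gd(y+z)=\gb_a b+\gl'(x'+z')$. Dividing by $\gl$ produces the two linear relations $\half a+x+z=\half b+y'+z'$ and $\half a+y+z=\half b+x'+z'$; their difference is exactly part~(2). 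Part~(3), after rewriting its two sides through the defining decompositions into the form $b-2(y+z)=a-2(x'+z')$, is just the second of these relations rearranged. For part~(1) I would add the two relations and eliminate the sums $x+y$ and $x'+y'$ using the defining identities $c+x+y+z=b-\half\gl a$ and $c'+x'+y'+z'=a-\half\gl b$, which should leave precisely $\half\gl(b-a)+z-c=z'-c'$.
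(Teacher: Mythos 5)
Your proposal is correct and takes essentially the same approach as the paper: specialize the identities of Lemmas \ref{bab}, \ref{b^2}, \ref{abb} and \ref{6} to $\gl=\gl'=\gd=\gd'=2\ga_b=2\gb_a$, and derive (1)--(3) from the two expressions of $ab$ and $ba$ in the $a$- and $b$-eigendecompositions (the paper's \eqref{f1}--\eqref{f2}). Your few reroutings --- (6) via \eqref{6-94} and \eqref{6-98} instead of Lemma \ref{bab}(4), (7) via \eqref{6-91} and \eqref{6-95} instead of Lemma \ref{bab}(1), (9) read off directly from \eqref{6-95}, and (3) obtained straight from the $ba$ relation rather than as the difference of (1) and (2) --- all check out and are only cosmetic variants of the paper's computations.
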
 
\begin{proof}
We have
\[
ab=\ga_b a+\gl x+\gl z=\gb_a b+\gl y'+\gl z',
\]
so
\begin{equation}\label{f1}
\half\gl(a-b)+\gl x+\gl z=\gl y'+\gl z'.
\end{equation}
By symmetry
\begin{equation}\label{f2}
\half\gl (b-a)+\gl x'+\gl z'=\gl y+\gl z.
\end{equation}
\medskip

\noindent
(1)\quad
Subtracting \eqref{f2} from \eqref{f1} we get
\begin{align*}
\Big(\half\gl(a-b)+\gl x+\gl z\Big)&-\Big(\half\gl (b-a)+\gl x'+\gl z'\Big)\\
&=\gl(y'+z'-y-z),
\end{align*}
so,
\[
a-b+x+y+2z=x'+y'+2z',
\]
or
\[
a-b+(b-\half\gl a-c)+z=a-\half\gl b-c'+z'.
\]
This shows (1).
\medskip

\noindent
(2)\quad  This is obtained by adding \eqref{f1} to \eqref{f2}.
\medskip

\noindent
(3)  This is obtained by subtracting  (1) from  (2).
\medskip

\noindent
(4\& 5)  Part (4) are equation \eqref{x1}--\eqref{x4} in this special case.  Part (5) 
are equation \eqref{y1}--\eqref{y4} in this special case.
\medskip

\noindent
(6)  This follows from Lemma \ref{bab}(4).
\medskip

\noindent
(7)  This follows from Lemma \ref{bab}(1).
\medskip

\noindent
(8)  This follows from Lemma \ref{6}(2), because here $\gr=\gc=\half(\gl(1-\gl)-\gl.$
\medskip

\noindent
(9) $c^2+x^2+y^2+z^2=\half\gl(1-\half\gl)a+c,$ by Lemma \ref{b^2}(1).
Now (9) follows from (8).
\medskip

\noindent
(10) By (9),
\[
c^2+x^2+z^2-c^2=\half\gl(1-\half\gl)a+\half(1-\gl)c.
\]
By (8) and (9) we get
\[
z^2-c^2=\half\gl(1-\half\gl)a+\half(1-\gl)c-\half(1+\gl)c=\half\gl(1-\half\gl)a-\gl c.\qedhere
\]
\end{proof}

\begin{prop}\label{f+}
Assume the hypotheses of Lemma \ref{f} hold.  Then
\begin{enumerate}
\item
$z'=\half (b-ba)+\frac{bz-bc}{\gl}.$

\item 
$c'=z'+\half\gl (a-b)-z+c.$

\item
$(z')_{\gl,0}=0,$ $z'_{0,\gl}=0,$
$z'_{\gl,\gl}=\half z,$ $\ga_{z'}a +z'_{0,0}=\half (1-\half\gl) a- \half c,$ so
\[\tag{$*$}
z'=\half (1-\half\gl) a- \half c+\half z.
\]

\item
$c'_{\gl,0}=-\half\gl x$  and $c'_{0,\gl}=-\half\gl y.$

\item
$\dim(A)\le 3.$
\end{enumerate}
\end{prop}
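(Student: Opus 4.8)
The plan is to pin down $z'=a_{\gl,\gl}(b)$ and $c'=a_{0,0}(b)$ as explicit combinations of the spanning elements $a,c,z$ of Proposition \ref{span}, read off their $a$-eigencomponents, and then use the $a\leftrightarrow b$ symmetry of the hypotheses to collapse the dimension. Part (2) is immediate: it is just Lemma \ref{f}(1) solved for $c'$, since $\half\gl(b-a)+z-c=z'-c'$ rearranges to $c'=z'+\half\gl(a-b)-z+c$. For part (1) I would isolate the $A_{\gl,\gl}(b)$-component of $a$ by a double eigen-projection: $\frac{L_b(L_b-1)}{\gl(\gl-1)}$ applied to $a$ extracts the $L_b$-eigenvalue-$\gl$ part $x'+z'$, and then $\frac{R_b(R_b-1)}{\gl(\gl-1)}$ annihilates $x'$ (whose $R_b$-eigenvalue is $0$) and fixes $z'$; rewriting via $ba=\half\gl a+\gl(y+z)$ and the products $bz,bc$ (where $z=b_{\gl,\gd}$, $c=b_{0,0}$) and simplifying through Lemma \ref{f} gives the stated formula.

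The computational heart is part (3). I would substitute $b=\half\gl a+c+x+y+z$ and $ba=\half\gl a+\gl y+\gl z$ into the formula of (1) and expand $bz,bc$ as products of $a$-eigenvectors. The $\zz_2\times\zz_2$-grading does the bookkeeping: every cross term $xz,yz,xc,yc$ lands in $A_{0,\gl}(a)$ or $A_{\gl,0}(a)$, never in $A_{1,1}\oplus A_{0,0}$ or $A_{\gl,\gl}(a)$. Hence the $(\gl,\gl)$-component is simply $\half z$, and by Lemma \ref{f}(6),(10) the $(A_{1,1}\oplus A_{0,0})$-part is $\half(1-\half\gl)a-\half c$, which is the formula $z'=\half(1-\half\gl)a-\half c+\half z$. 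For the off-diagonal pieces, the $A_{\gl,0}(a)$-component reduces to $\half x+\frac{yz-xc}{\gl}$, vanishing because subtracting Lemma \ref{f}(4)(ii) from (4)(i) gives $yz-xc=-\half\gl x$; and the $A_{0,\gl}(a)$-component reduces to $\half(1-\gl)y+\frac{xz-yc}{\gl}$, vanishing by Lemma \ref{f}(5)(iii).

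Part (4) is then routine: feeding the formula of (3) into (2) gives $c'=(\half+\frac{\gl}{4})a-\half\gl b+\half c-\half z$, and since $a,c,z$ lie in $A_{1,1},A_{0,0},A_{\gl,\gl}(a)$, the only source of an $A_{\gl,0}(a)$- or $A_{0,\gl}(a)$-component of $c'$ is the $x$- resp.\ $y$-part of $-\half\gl b$, giving $c'_{\gl,0}=-\half\gl x$ and $c'_{0,\gl}=-\half\gl y$. For part (5) I would invoke the $a\leftrightarrow b$ symmetry of the hypotheses (swapping $c\leftrightarrow c'$, $x\leftrightarrow x'$, $y\leftrightarrow y'$, $z\leftrightarrow z'$), so the formula of (3) also reads $z=\half(1-\half\gl)b-\half c'+\half z'$. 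Substituting into it both the formula of (3) for $z'$ and the expression for $c'$ above, everything simplifies to $z=b-\half\gl a-c$, i.e.\ (via $b=\half\gl a+c+x+y+z$) the relation $x+y=0$. Then $b=\half\gl a+c+z\in\ff a+\ff c+\ff z$, so (2) and (3) put $c'$ into $\ff a+\ff c+\ff z$ as well, forcing $c'_{\gl,0}=0$; comparison with (4) then gives $\half\gl x=0$, hence $x=0$ (as $\charc(\ff)\ne2$, $\gl\ne0$) and $y=0$. By Proposition \ref{span}, $A=\ff a+\ff c+\ff z$, so $\dim(A)\le3$.

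I expect the main obstacle to be twofold: organizing the product identities of Lemma \ref{f} in part (3) so that both off-diagonal components genuinely cancel (a single sign or coefficient slip there is fatal), and spotting the symmetry step in (5) that manufactures $x+y=0$. Once that relation is in hand, the contradiction with part (4) is immediate and the dimension count closes.
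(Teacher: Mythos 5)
Your proposal is correct, and in parts (2)--(4) it is in substance the paper's own proof: the same closed form for $z'$, the same component bookkeeping from Lemma \ref{f} (subtracting 4(ii) from 4(i) to kill the $(\gl,0)$-part, 5(iii) for the $(0,\gl)$-part, (6) and (10) for the $(\gl,\gl)$- and $(A_{1,1}\oplus A_{0,0})$-parts), and the same reading-off of $c'_{\gl,0}=-\half\gl x$ and $c'_{0,\gl}=-\half\gl y$. The two ends differ. For (1), the paper simply left-multiplies Lemma \ref{f}(1) by $b$: since $z'\in A_{\gl,\gl}(b)$ and $c'\in A_{0,0}(b)$, one gets $\gl z'=\half\gl(b^2-ba)+bz-bc$ in one line. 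Your double eigenprojection $\frac{R_b(R_b-1)}{\gl(\gl-1)}\cdot\frac{L_b(L_b-1)}{\gl(\gl-1)}$ applied to $a$ does isolate $z'$, but reducing that expression to the stated formula is genuine work that you leave as an unexecuted ``simplification''; this is the one incomplete spot in your write-up, and the one-line multiplication makes it avoidable. For (5), your route is genuinely different and it checks out: substituting the symmetric identity $z=\half(1-\half\gl)b-\half c'+\half z'$ together with your formulas $c'=(\half+\frac{\gl}{4})a-\half\gl b+\half c-\half z$ and $z'=\half(1-\half\gl)a-\half c+\half z$ does collapse (I verified the coefficients) to $z=b-\half\gl a-c$, i.e.\ $x+y=0$; the paper instead compares the $(\gl,0)$-components \emph{with respect to $b$} on both sides of $(*)$, using the symmetric facts that $c$ has $(\gl,0)$-component $-\half\gl x'$ and that $z\in\ff b+\ff c'+\ff z'$, and gets $x'=0$ in a single comparison, then contradicts Proposition \ref{dim51}(4). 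One simplification you missed: since $x\in A_{\gl,0}(a)$ and $y\in A_{0,\gl}(a)$ lie in complementary summands of the direct $\zz_2\times\zz_2$-decomposition, $x+y=0$ already forces $x=y=0$ outright, so your detour through $c'_{\gl,0}$ and part (4) is harmless but unnecessary; Proposition \ref{span} then closes the dimension count as you say. Net effect: your endgame is slightly longer but equally valid, and the paper's is sharper because one component comparison suffices.
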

\begin{proof}
(1)  This is obtained from Lemma \ref{f}(1) by multiplying
on the left by $b$ and then dividing by $\gl.$
\medskip

\noindent
(2) This comes from Lemma \ref{f}(1)
\medskip

\noindent
(3) By (1) and by Lemma \ref{f} 4(i) and 4(ii),
\[
\textstyle{z'_{\gl,0}=\half x+\frac{yz-xc}{\gl}=\half x-\half x=0.}
\]
By (1), and Lemma \ref{f}(5(iii)),
\[
z'_{0,\gl}=\half(1-\gl)y+\frac{xz-yc}{\gl}=\half(1-\gl)y+\half(\gl-1)y=0.
\]
By (1) and Lemma \ref{f}(5),
\[
z'_{\gl,\gl}=\half(1-\gl)z+\frac{\half\gl^2z+cz-zc}{\gl}=\half(1-\gl)z+\half\gl z=\half z.
\]
Finally, by (1) and Lemma \ref{f}(10),
\begin{gather*}
\ga_{z'}a+z'_{0,0}=\half c+\frac{z^2-c^2}{\gl}=\half c+\half(1-\half\gl)a-c\\
=\half (1-\half\gl) a- \half c.
\end{gather*}
\medskip

\noindent
(4) This follows from (2) and (3).
\medskip

\noindent
(5) Note that by (4) and by symmetry, $c_{\gl,0}=-\half \gl x'.$
By (3), and by symmetry,
$z\in \ff b+\ff c'+\ff z'.$ Comparing the $(\gl,0)$-component in the two sides of 
equality $(*)$ for $z'$ in (3),
with respect to $b,$ we get
\[
0=(\half\gl-1)a_{\gl,0}+c_{\gl,0}=(\half\gl-1)x'-\half\gl x'=-x'.
\]
By Proposition \ref{dim51}(4), $\dim(A)\le 3.$
\end{proof}

\subsection{\bf The dimension of $A$ is $\le 3$}\label{main1}$ $
\medskip

\noindent
The goal of this subsection is to prove that $\dim(A)\le 3.$
So we assume, toward a contradiction, that $\dim(A)\ge 4.$
As mentioned in the introduction, we know already that $\dim(A)\le 5.$

\begin{prop}\label{dim4} 
Consider the assertion
\[\tag{$*$}
\begin{aligned} 
&\text{there exist a Miyamoto involution $\gt$ associated with $b$} \\
&\text{such that $A$ is generated by $a, a^{\gt}.$}
\end{aligned}
\]
\begin{enumerate}
\item
If $\dim(A)=4,$ then $(*)$ holds.

\item
If $\dim(A)=5,$ and any subalgebra of $A$ generated by two primitive axes
of the same type has dimension distinct from $4,$  then $(*)$ holds.
\end{enumerate}
 \end{prop}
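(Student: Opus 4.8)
The plan is to collapse everything to a single membership statement. Since $A$ is generated by $a$ and $b$, and $a\in\langle a,a^{\gt}\rangle$ for every Miyamoto involution $\gt$ associated with $b$, the assertion $(*)$ is equivalent to the claim that $b\in\langle a,a^{\gt}\rangle$ for some such $\gt$: once $b$ lies in $\langle a,a^{\gt}\rangle$ we get $\langle a,a^{\gt}\rangle\supseteq\langle a,b\rangle=A$. I will use throughout that $\dim(A)\ge 4$ forces $\gb_a\ne 0$, since by Proposition \ref{dim51}(4) all of $x,y,z,x',y',z'$ are nonzero while $\gb_a=0$ would give $x=y=0$ by Proposition \ref{dim51}(3). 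Moreover, when $\dim(A)=4$, equation \eqref{4-1} together with Proposition \ref{dim51}(4) forces $c'=0$, so that $a=\gb_a b+x'+y'+z'$ and $A_{0,0}(b)=0$.

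The mechanism is the grading. Each of the three Miyamoto involutions $\gt_{\gl',b},\gt_{\gd',b},\gt_{\diag,b}$ of $b$ fixes $b$ and respects the $\zz_2\times\zz_2$-grading of $A$ relative to $b$, so $B_{\gt}:=\langle a,a^{\gt}\rangle$ is $\gt$-invariant and splits as $B_{\gt}=(B_{\gt}\cap A^+)\oplus(B_{\gt}\cap A^-)$, where $A^{\pm}$ are the $\pm1$-eigenspaces of $\gt$. Writing $a^{\pm}:=\half(a\pm a^{\gt})$, both lie in $B_{\gt}$, and for $\gt=\gt_{\gl',b}$ one has $a^+=\gb_a b+c'+y'$ and $a^-=x'+z'$. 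Because the grading gives $A^-\cdot A^-\subseteq A^+$ and $A^+\cdot A^-\subseteq A^-$, the products $(a^-)^2\in B_{\gt}\cap A^+$ and $a^+a^-,\,a^-a^+\in B_{\gt}\cap A^-$ are new members of $B_{\gt}$. Using $L_bx'=\gl'x'$, $R_by'=\gd'y'$, $L_bz'=\gl'z'$, $R_bz'=\gd'z'$ and the fusion rules, I would expand these products, landing $(a^-)^2$ in the span of $\{b,c',y'\}$ and $a^+a^-$ in the span of $\{x',z'\}$. The aim is to show that for at least one $\gt$ the set $\{a^+,(a^-)^2\}$ spans $A^+$ and $\{a^-,a^+a^-\}$ spans $A^-$, whence $B_{\gt}=A$ and in particular $b\in B_{\gt}$.

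For $\dim(A)=4$ this is cleanest, since $c'=0$ makes $A^+=\ff b\oplus\ff y'$ and $A^-=\ff x'\oplus\ff z'$ each two\-dimensional; then $B_{\gt}=A$ reduces to the two non\-degeneracy conditions that $\{a^+,(a^-)^2\}$ and $\{a^-,a^+a^-\}$ be linearly independent. Each is a single determinant in $\gb_a,\gl',\gd'$ and the structure constants $x'^2,y'^2,z'^2,x'z'+z'x'$, which are supplied by the ($a\leftrightarrow b$)\-symmetric analogues of Lemmas \ref{bab}, \ref{b^2} and \ref{abb}. I would evaluate these determinants. The main obstacle is the degenerate subcase in which a chosen $\gt$ fails one independence; I expect to dispatch it by repeating the computation with $\gt_{\gd',b}$ (which swaps the roles of $\{y'\}$ and $\{x'\}$, hence of the pairs $\{x',z'\}$ and $\{y',z'\}$) and with $\gt_{\diag,b}$, and then showing, via the structure equations and $\gb_a\ne 0$, that these degeneracies cannot hold simultaneously for all three involutions.

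For $\dim(A)=5$ we have $c'\ne 0$, so $A^+=\ff b\oplus\ff c'\oplus\ff y'$ is three\-dimensional and the bookkeeping is heavier; here I would only aim to produce four independent elements, for instance $a^-$ together with $a^+,(a^+)^2,(a^-)^2$, giving $\dim(B_{\gt})\ge 4$ for a suitable $\gt$. Since $B_{\gt}=\langle a,a^{\gt}\rangle$ is generated by the two primitive axes $a$ and $a^{\gt}$, which have the \emph{same} type $(\gl,\gd)$ because $\gt$ is an automorphism of $A$, the standing hypothesis of part (2) forbids $\dim(B_{\gt})=4$. Hence $\dim(B_{\gt})\ge 4$ upgrades to $\dim(B_{\gt})=5=\dim(A)$, so $B_{\gt}=A$ and $(*)$ holds. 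In both parts the decisive step, and the one I expect to be hardest, is the verification of the linear\-independence (non\-degeneracy) claims, together with the argument in part (1) that not all three Miyamoto involutions can be simultaneously degenerate.
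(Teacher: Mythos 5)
Your framework is sound as far as it goes (the reduction of $(*)$ to $b\in\langle a,a^{\gt}\rangle$, the $\gt$-invariance of $B_{\gt}$ and its splitting along the grading, $c'=0$ when $\dim(A)=4$, and $\gb_a\ne 0$), but the proposal stops exactly where the content of the proposition lies: you never establish that for at least one of the three involutions the sets $\{a^+,(a^-)^2\}$ and $\{a^-,a^+a^-\}$ are independent; you only announce that you ``would evaluate these determinants'' and ``expect to dispatch'' the degenerate subcases. Those degeneracies are real, not a formality. Carrying out your own computation for $\gt=\gt_{\gl',b}$ with $c'=0$, the $(a\leftrightarrow b)$-symmetric analogues of \eqref{6-95} and \eqref{6-97} give $(a^-)^2=\gl'^{-1}\gb_a\bigl(\ga_b(1-\gd)+\gd-\gb_a\bigr)b+\gl'^{-1}\ga_b(1-\gd)\,y'$, so the determinant against $a^+=\gb_a b+y'$ is a scalar multiple of $\gb_a(\gb_a-\gd)$, which vanishes whenever $\gb_a=\gd$; the analogous determinant for $\gt_{\gd',b}$ vanishes when $\gb_a=\gl$. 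Thus the entire burden shifts to showing the three involutions cannot be simultaneously degenerate (plus the untouched $\gt_{\diag}$ case, the whole $A^-$-side spanning, and the heavier $\dim(A)=5$ bookkeeping), and none of that is done. As written, this is a plan with the decisive verification missing, so it does not yet prove the proposition.

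For comparison, the paper avoids all structure-constant determinants by arguing contrapositively from the earlier classification. Since $\gt$ is an automorphism, $a^{\gt}$ is a primitive axis of the \emph{same} type $(\gl,\gd)$ as $a$; if $A'=\langle a,a^{\gt}\rangle$ had dimension $\le 3$, Remark \ref{eg} applies to the pair $a,a^{\gt}$ inside $A'$: either $\gl\ne\gd$ and $aa^{\gt}=\gd a+\gl a^{\gt}$, $a^{\gt}a=\gl a+\gd a^{\gt}$, and expanding $a^{\gt}$ in the spanning set $a,c,x,y,z$ of Proposition \ref{span} and matching eigencomponents forces $a^{\gt}\in\ff a+\ff z$; or $\gl=\gd$, $A'$ is commutative, and $aa^{\gt}=a^{\gt}a$ forces $a^{\gt}\in\ff a+\ff c+\ff z$. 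But since $x',y',z'\ne 0$ (Proposition \ref{dim51}(4)), the four axes $a,a^{\gt_1},a^{\gt_2},a^{\gt_3}$ are linearly independent, so they cannot all lie in a subspace of dimension $\le 3$; hence some $\langle a,a^{\gt}\rangle$ has dimension $\ge 4$, which equals $\dim(A)$ in part (1), and in part (2) the standing hypothesis rules out dimension $4$, giving dimension $5$. Your closing step in part (2) (same type of $a,a^{\gt}$, upgrading $\ge 4$ to $=5$) matches the paper; what is missing is precisely the ``not all three can fail'' argument, which the paper gets for free from this linear independence and which your approach would have to reconstruct by hand.
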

  \begin{proof} 
	Suppose that $\dim(A)\ge 4.$ By Proposition \ref{dim51}(4), $x',y',z'$ are all distinct from $0.$
Hence there are  three distinct Miyamoto involutions 
$\gt_1,$ $\gt_2,$ $\gt_3,$ with respect to $b$, and thus axes of the form 
$a^\gt$ where $\gt = \gt_i,$ $1 \le i \le 3,$ and the 
axes $a,a^{\gt_1},a^{\gt_2},a^{\gt_3}$ are linearly independent.

Let $a'$ be one of the axes of the form
$a^\gt.$ Suppose that $A',$ the subalgebra generated by $a,a^{\gt},$
has dimension $\le 3.$

If $\gl\ne \gd$ then   since $a'$ is also of type 
$(\gl,\gd),$ $A'$ is  as in Remark  \ref{eg}(1) (with $a'$ in place of $b$).
Writing 
\[
a' = \gc_a a+\gc_c c +\gc_x x + \gc_y y + \gc_z z,
\]
We have
\[
aa' =  \gd a + \gl a' = (\gd +\gl \gc_a)a+ \gl \gc_c c   +\gl \gc_x
   x +\gl \gc_y y +\gl \gc_z z.
\]
 On the other hand, 
\[
a a' = a(   \gc_a a+\gc_c c 
  + \gc_x x + \gc_y y + \gc_z z)= \gc_a a+\gl \gc_x x +\gl \gc_z z.
	\]
Matching components shows $\gc _c = 0,$ $\gd +\gl \gc_a = \gc_a$,
and $\gc_y = 0.$  Also symmetrically $\gc_x = 0$ (considering $a'a = \gl a +
\gd a'$), implying 
\[
a' = a + \gc_z z,
\]
so $a' \in \ff a + \ff z.$ But there
 are three linearly independent possibilities for the axes
$a^\gt,$ a contradiction.

Thus we may assume that $\gl = \gd,$ which, by Remark \ref{eg}, implies $A'$ is commutative. But now
$aa' = a'a$ implies $\gc_x = \gc_y = 0,$ so $a' \in  \ff a+\ff c  +
\ff z,$ again contradicting $a,a^{\gt_1},a^{\gt_2},a^{\gt_3}$
being linearly independent.

Hence $\dim(A')\ge 4,$ for some $\gt\in\{\gt_1,\gt_2,\gt_3\}.$
This shows that both parts (1) and (2) hold.
\end{proof}

\begin{thm}\label{dim401}
$\dim(A)\le 3.$
\end{thm}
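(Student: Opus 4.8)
The plan is to argue by contradiction: assume $\dim(A)\ge 4$, so that $\dim(A)\in\{4,5\}$ since we already know $\dim(A)\le 5$. The idea is to use the Miyamoto involutions attached to $b$ to replace the generating pair $(a,b)$ by a pair of axes of the \emph{same} type, and then to feed this into the machinery of Proposition \ref{dim51}(5) together with Lemma \ref{f} and Proposition \ref{f+}, which will force $\dim(A)\le 3$, a contradiction.

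First I would record the key equal-type impossibility. Suppose $B$ is any algebra generated by two primitive axes of the \emph{same} type $(\gl,\gd)$, and suppose $\dim(B)\ge 4$. Applying Proposition \ref{dim51}(5) with $\gl'=\gl$ and $\gd'=\gd$ yields $\gl=\gd=2\ga_b=2\gb_a$, which are precisely the hypotheses of Lemma \ref{f}. Proposition \ref{f+}(5) then forces $\dim(B)\le 3$, contradicting $\dim(B)\ge 4$. Hence no algebra generated by two primitive axes of the same type has dimension $4$ or $5$. In particular, every subalgebra of $A$ generated by two primitive axes of the same type has dimension $\le 3$, so the extra hypothesis appearing in Proposition \ref{dim4}(2) is automatically satisfied.

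Next I would apply Proposition \ref{dim4}. By the previous paragraph its hypothesis is met, so assertion $(*)$ holds in both cases (for $\dim(A)=4$ by part (1), and for $\dim(A)=5$ by part (2)): there is a Miyamoto involution $\gt$ associated with $b$ such that $A$ is generated by $a$ and $a^{\gt}$. Since $\gt$ is an algebra automorphism of order $2$, $a^{\gt}$ is again a primitive axis of the same type $(\gl,\gd)$ as $a$. Thus $A$ is generated by the two primitive axes $a,a^{\gt}$ of the same type, while $\dim(A)\ge 4$. By the equal-type impossibility of the previous paragraph, now applied to $B=A$ with the generating pair $a,a^{\gt}$, this is a contradiction. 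Therefore $\dim(A)\le 3$.

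The conceptual crux is the reduction to the equal-type situation afforded by Proposition \ref{dim4}; once that reduction is available, the theorem collapses onto the explicit computations already carried out in Lemma \ref{f} and Proposition \ref{f+}, whose conclusion \ref{f+}(5) is exactly what is required. The one point demanding care is the apparent circularity in the hypothesis of Proposition \ref{dim4}(2), which is dissolved by first establishing, independently of \ref{dim4}, that equal-type subalgebras of dimension $4$ cannot occur.
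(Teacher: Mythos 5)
Your proof is correct and follows essentially the same route as the paper: reduce via Proposition \ref{dim4} to a generating pair of axes of the same type, then combine Proposition \ref{dim51}(5) with Proposition \ref{f+}(5) to force $\dim(A)\le 3$. Your only deviation is organizational—you establish the equal-type impossibility up front for all dimensions $\ge 4$ and use it to verify the hypothesis of Proposition \ref{dim4}(2), whereas the paper first settles the $\dim(A)=4$ case (for arbitrary types) and then cites that paragraph; both dispositions are sound, and yours arguably makes the non-circularity of \ref{dim4}(2) slightly more transparent.
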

\begin{proof}
Suppose first that $\dim(A)=4.$  By Proposition \ref{dim4}(1), 
we may assume that $b$ is also of type $(\gl,\gd).$
By Proposition \ref{dim51}(5), $\gl=\gl'=\gd=\gd'=2\ga_b=2\gb_a.$
Now Proposition \ref{f+}(5) supplies a contradiction.

Suppose $\dim(A)=5.$  Then, by the previous paragraph, the hypothesis
of Proposition \ref{dim4}(2) holds, so by that proposition
we may assume that $b$ is also of type $(\gl,\gd)$ and the same
argument leads to a contradiction.
\end{proof}
\medskip

\noindent
\begin{proof}[Proof of the Main Theorem.]
We can now complete the proof of the Main Theorem.
In subsection \ref{main1} we showed that part (1) of the Main Theorem holds.
It remains to prove part (2).  Let $X$ be the set
of {\it all} primitive axes in $A.$ 
By~\cite[Theorem 3.3]{RoSe2}, $A$ is spanned by $X.$
Let $a,b\in X,$  with $a$ of type $(\gl,\gd).$ By part (1) of the Main Theorem,
$\dim(B)\le 3,$ where $B$ is the subalgebra of $A$ generated by $a,b.$
By \cite[Theorem C]{RoSe2} and \cite[Theorems A, B]{RoSe1},
$a,b$ are of Jordan type {\it inside the subalgebra $B$.}
Thus $b\in\ff a+A_{0,0}(a)+A_{\gl,\gd}(a).$  Since this
holds for any $b\in X,$ and since $A$ is spanned
by $X,$ we see that $A=\ff a+A_{0,0}(a)+A_{\gl,\gd}(a).$
Thus $a$ has Jordan type.  As $a$ was chosen arbitrarily,
we see that this holds  for all $x\in X.$
\end{proof}

\end{document}